\newcommand{\sF}{{\mathcal F}}
\newcommand{\F}{{\mathbb{F}}}
\newcommand{\N}{{\mathbb N}}
\newcommand{\R}{{\mathbb R}}
\newcommand{\be}{\begin{eqnarray}}
\newcommand{\ee}{\end{eqnarray}}
\newcommand{\nn}{{\nonumber}}
\newcommand{\dd}{\displaystyle}
\newcommand{\Tr}{\mbox{Tr}}
\newcommand{\Norm}{\mbox{N}}
\newtheorem*{theoremA}{Theorem A}
\theoremstyle{definition}
\newtheorem{theorem}{Theorem}
\newtheorem{definition}{Definition}
\newtheorem{corollary}{Corollary}
\newtheorem{lemma}{Lemma}
\newtheorem{example}{Example}
\newtheorem{construction}{Construction}
\newtheorem{proposition}{Proposition}
\newenvironment{proofA}{\noindent\textit{Proof of Theorem 1}}{\hfill{$\Box$}}
\theoremstyle{remark} 
\providecommand{\keywords}[1]
{
	\small	
	\textbf{{Keywords:}} #1
}
\providecommand{\amssub}[1]
{
	\small	
	\textbf{{Subject Classification:}} #1
}
\newcommand\email[1]{\_email #1\q_nil}
\def\_email#1@#2\q_nil{%
	\href{mailto:#1@#2}{{\emailfont #1\emailampersat #2}}
}
\newcommand\emailfont{\sffamily}
\newcommand\emailampersat{{\color{black}\small@}}
\begin{document}
\title{Improved lower bound on the family complexity of Legendre sequences}
\author{Yağmur Çakıroğlu and Oğuz Yayla \\
	\small Department of Mathematics, Hacettepe University\\
	\small Beytepe, 06800, Ankara, Turkey\\
	\small\email{yagmur.cakiroglu@hacettepe.edu.tr}\\
	\small\email{oguz.yayla@hacettepe.edu.tr}
}

\maketitle

\begin{abstract}In this paper we study a family of binary Legendre sequences and its family complexity. Family complexity is a pseudorandomness measure introduced by Ahlswede et al. in 2003. A lower bound on the family complexity of a family based on the Legendre symbol of polynomials over a finite field was given by Gyarmati in 2015. In this article we improve bound given by Gyarmati. The new bound depends on the Lambert $W$ function and the number of elements in a finite field belonging to its proper subfield.

\keywords{pseudorandomness, binary sequences, family complexity, family of binary Legendre sequences, Lambert $W$ Function, polynomials over finite fields}

\amssub{11K45 \and 94A55 \and  94A60}

\end{abstract}

\section{Introduction}

A pseudorandom sequence is a sequence of numbers which is generated by a deterministic algorithm and looks truly random. A pseudorandom sequence in the interval [0, 1) is called a sequence of pseudorandom numbers. Randomness measures of a sequence depend on its application area, for instance, it has to be unpredictable for cryptographic applications, uncorrelated for wireless communication applications and uniformly distributed for quasi-Monte Carlo methods \cite{golomb2005signal,NA2015}.

In this paper we consider Legendre sequences.
It is known that the Legendre sequence has several good randomness measures such as high linear complexity \cite{aly2006,ding1998linear,Spa2013,turyn1964linear} and small correlation measure up to rather high orders \cite{AMS1997} for cryptography, and a small (aperiodic) autocorrelation \cite{MW2004,paley1933orthogonal} for wireless communication, GPS, radar or sonar.



In case, a family of sequences is considered for an application, for instance as a key-space of a cryptosystem, then its randomness in terms of many directions is concerned. For instance, a family of sequences must have large family size, large family complexity and low crosscorrelation.
Family complexity as a randomness measure was first introduced by Ahlswede, Khachatrian, Mauduit and Sárközy \cite{AKMS2003} in 2003, and they estimated the family complexity of some sequences. 
Then, in 2006, they studied families of pseudorandom sequences on $k$-symbols and their family complexity \cite{AMS20061,AMS20062}. 
In 2013 Mauduit and Sárközy studied family complexity measure of sequences of $k$ symbols and they also gave the connection between family complexity and VC-dimension \cite{AMS2013}.
In 2016, Winterhof and the second author gave a relation between family complexity and cross correlation measure \cite{WO2016}. Moreover the complexity measures for different families have been studied in some papers \cite{BRD2014,F2010,GMS2004,GMS2012,GMS2014,S2017}.

Recently Gyarmati \cite{G2015} presented a bound for the family complexity of Legendre sequences. In this paper we improve Gyarmati's bound for all primes $p$ and degrees $k$. For instance the bound given in this article is positive for all $p$ but the bound is positive for $p\ge2128240847$. We plotted two bounds on family complexity of Legendre sequences for all primes $p$ and degrees $k$. We compare also two bounds in terms of time complexity. Then we plotted the difference between elapsed times for calculating bounds given by Gyarmati \cite{G2015} and Theorem 1. We give more details about these comparisons in the last section.

The paper is organized as follows. The new bound we present in this paper depends on Lambert $W$ function, so we give its definition and some properties in Section \ref{sec:LWF}. Then we present some auxiliary lemmas in Section \ref{sec:pre} and previous results in Section \ref{sec:previous}. Next, we give our main contribution in Section \ref{sec:contribution}. Finally we compare the new bound and Gyarmati's one in Section \ref{sec:compare}.

\section{Lambert $W$ Function}
\label{sec:LWF}
Firstly we begin with the definition of Lambert $W$ function and we present some properties and examples for this function.

\begin{definition} \label{def:W} \textbf{(Lambert $W$ Function)} The Lambert $W$ function, also called the omega function or product logarithm, is defined as the multivalued function $W$ that satisfies 
$$z=W(z)e^{W(z)}$$
for any complex number $z$. 
\end{definition}
Equivalently, Lambert $W$ function is known as the inverse function of $f(z)=ze^{z}$. 
Note that the multivaluedness of the Lambert $W$ function means that mostly there are multiple solutions since the function $f$ is not injective.  The equation $y=ze^{z}$ is by definition solved by $z=W(y)$ and the equation $y=z\log z$ is solved by $z=\frac{y}{W(y)}$. So many equations containing exponential expressions can be solved by the Lambert $W$ Function. For instance, the equation $xa^{x}=b$ is solved by  $$x=\frac{W(b\ln{(a)})}{\ln{(a)}},$$ the equation $a^{x}=x+b$ is solved by $$x=\frac{-b-W(-a^{-b}\ln{(a)})}{\ln{(a)}},$$ and the equation $x^{x^{a}}=b$ is solved by $$\exp{\bigg(\frac{W(a\log{(b)})}{a}\bigg)}.$$ 

The Lambert $W$ function has many applications in pure and applied mathematics, see \cite{LWF1996} for details about its applications. Lambert $W$ function stems form the equation proposed by Johann Heinrich Lambert in 1758 $$x^{\alpha}-x^{\beta}=(\alpha-\beta)vx^{\alpha+\beta},$$ which is known as Lambert's transcendental equation. Then in 1779 Euler wrote a paper about Lambert's transcendental equation and introduced special case which is nearly the definition of $W$ function \cite{E1779}. Actually Euler investigated theory behind the $W$ function and he had referenced work by Lambert in his paper, and so this function is called Lambert $W$ function. 
Lambert $W$ function, which has applications in many fields from past to present, was applied to problems ranging from quantum physics to population dynamics, to the complexity of algorithms.
The new bound we obtain for $f$-complexity given in this paper is related to this function. Now we give a simple example in order to show how we use this function in numerical solutions.
\begin{example} Let us solve  $4^t=3t$ for $t$. We first divide both sides by $4^{t}$  to get $$1 = 3t4^{-t} = 3t e^{-t\ln{4}}$$ 
and equivalently by multiplying $-\frac{\ln{(4)}}{3}$ we have
$$-\frac{\ln{(4)}}{3}=-t\ln{4}e^{-t\ln{4}}.$$
Since the right hand side of the equation is of the form $ze^{z}$ for $z = -t\ln t$, we can write the solution by definition of Lambert $W$ function 
$$t=\frac{W\bigg(-\frac{\ln{(4)}}{3}\bigg)}{-\ln{(4)}}$$ which is approximately $t \approx 0.611132623758349 - 0.480987054240275i$.
\end{example}
We note that $W$ function can be approximately evaluated by using some root-finding methods as given in \cite{LWF1996}.

\section{Preliminaries} \label{sec:pre}
In this section we present some definitions and results which we need for the proof of our new bound on family complexity.
\begin{definition}Let $\F_{q^n}$ denote the finite field having $q^n$ elements and define $G_{q,n}$ as follows.
$$G_{q,n}=\{\alpha \in \F_{q^n} : \exists t|n, t < n \mbox{ such that } \alpha\in \F_{q^t} \subset \F_{q^n}\}$$
\end{definition}

One can calculate the number of elements in $G_{q,n}$ for arbitrary $q,n$ by counting. But this method would be very slow. Thus, we need a formula for $|G_{q,n}|$, in order to do that we give some definitions and results below.
\begin{definition}\cite[Definition 2.1.22]{HBFF2013} The Möbius $\mu$ function is defined on the set of positive integers by
		\begin{displaymath}
	    \mu(m)= \left\{ \begin{array}{ll}
		1 & \textrm{if $m=1$}\\
		(-1)^{k} & \textrm{if $m=m_1m_2\dots m_k$ where the $m_i$ are distinct primes}\\
		0 & \textrm{if $p^2$ divides $m$ for some prime $p$}
		\end{array}\right.
		\end{displaymath} 
\end{definition}

Denote the number of monic irreducible polynomials of degree n over $\F_q$ by $I_q(n)$. Then the following formula is well known, see \cite[Chapter 14.3]{DFAA2004} or \cite{SJ2011} for its proof.

\begin{proposition} \cite{HBFF2013} (Gauss's Formula) For all $n\ge1$ any prime power q, we have 
$$I_{q}(n)=\frac{1}{n}\sum_{d|n}\mu(d)q^{n/d}$$
\end{proposition}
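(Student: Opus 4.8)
The plan is to first establish the basic counting identity
$$q^{n}=\sum_{d\mid n}d\,I_{q}(d)$$
and then recover the stated formula by Möbius inversion. For the identity I would use the standard structural facts about finite fields: the polynomial $x^{q^{n}}-x$ is separable and its roots are exactly the $q^{n}$ elements of $\F_{q^{n}}$. Then I would argue that a monic irreducible polynomial $f$ of degree $d$ over $\F_{q}$ divides $x^{q^{n}}-x$ if and only if $d\mid n$: a root $\alpha$ of $f$ generates $\F_{q^{d}}$, and $\F_{q^{d}}\subseteq\F_{q^{n}}$ precisely when $d\mid n$; when this holds, all $d$ roots of $f$ lie in $\F_{q^{n}}$, and separability guarantees they are distinct. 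Since every element of $\F_{q^{n}}$ is a root of a unique monic irreducible polynomial over $\F_{q}$, namely its minimal polynomial, counting the roots of $x^{q^{n}}-x$ two ways — once as $q^{n}$, once by grouping elements according to the degree $d\mid n$ of their minimal polynomial, each such polynomial accounting for $d$ elements — gives $q^{n}=\sum_{d\mid n}d\,I_{q}(d)$. Equivalently, one writes the factorization $x^{q^{n}}-x=\prod_{d\mid n}\prod_{f}f(x)$ over monic irreducibles $f$ of degree $d\mid n$ and compares degrees.

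For the second step I would invoke the Möbius inversion formula: if $g(n)=\sum_{d\mid n}h(d)$ for all $n\ge 1$, then $h(n)=\sum_{d\mid n}\mu(d)\,g(n/d)$. Setting $g(n)=q^{n}$ and $h(n)=n\,I_{q}(n)$, the identity just proved says $g(n)=\sum_{d\mid n}h(d)$, so inversion yields $n\,I_{q}(n)=\sum_{d\mid n}\mu(d)\,q^{n/d}$, and dividing by $n$ completes the proof. If one prefers not to cite Möbius inversion as a black box, it follows in one line by interchanging the order of summation in $\sum_{d\mid n}\mu(d)\sum_{e\mid(n/d)}h(e)$ and using the fact that $\sum_{d\mid m}\mu(d)$ equals $1$ when $m=1$ and $0$ otherwise.

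The only genuine content — and hence the step to be careful with — is the first one: the claim that a degree-$d$ monic irreducible divides $x^{q^{n}}-x$ exactly when $d\mid n$, together with separability so that no root is counted with multiplicity. Once the identity $q^{n}=\sum_{d\mid n}d\,I_{q}(d)$ is in hand, the remainder is purely formal bookkeeping. Since the excerpt explicitly permits citing the Möbius function machinery and points to \cite{DFAA2004,SJ2011} for exactly this statement, I would keep the write-up brief, stating the factorization of $x^{q^{n}}-x$, deducing the degree identity, and concluding with one application of Möbius inversion.
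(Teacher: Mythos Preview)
Your proposal is correct and is exactly the standard textbook argument. Note that the paper itself does not prove this proposition at all: it states the result as well known and refers the reader to \cite[Chapter 14.3]{DFAA2004} and \cite{SJ2011} for a proof, so there is no in-paper argument to compare against --- your outline is precisely the proof one finds in those references.
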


Note that this formula was discovered by Gauss \cite{G1965} for prime q, and so it is called Gauss's formula. By using the formula on $I_{q}(n)$, one can count the number of elements in $G_{q,n}$.
\begin{lemma}\label{lem:G} Let $n\in \N$ and $q$ be a prime power. Then $$ \lvert G_{q,n} \rvert =  q^n-n I_{q}(n)$$
\end{lemma}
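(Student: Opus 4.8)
The plan is to pass to the complement of $G_{q,n}$ inside $\F_{q^n}$ and identify it with the set of elements of $\F_{q^n}$ that have degree exactly $n$ over $\F_q$. Recall the standard subfield criterion: for a divisor $t$ of $n$ one has $\F_{q^t}\subseteq\F_{q^n}$, and an element $\alpha\in\F_{q^n}$ lies in $\F_{q^t}$ if and only if its minimal polynomial over $\F_q$ has degree dividing $t$. Since every $\alpha\in\F_{q^n}$ has degree dividing $n$, the first step is to observe that $\alpha\in G_{q,n}$ precisely when $\deg(\alpha)$ is a \emph{proper} divisor of $n$: if $\deg(\alpha)=t<n$ then $t\mid n$ and $\alpha\in\F_{q^t}\subsetneq\F_{q^n}$, so $\alpha\in G_{q,n}$; conversely any witnessing $t\mid n$ with $t<n$ and $\alpha\in\F_{q^t}$ forces $\deg(\alpha)\mid t<n$. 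Hence
$$\F_{q^n}\setminus G_{q,n}=\{\alpha\in\F_{q^n}:\deg(\alpha)=n\},$$
i.e.\ the complement is exactly the set of elements of $\F_{q^n}$ whose minimal polynomial over $\F_q$ is irreducible of degree $n$.

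The second step is to count this complement. Each monic irreducible polynomial $g\in\F_q[x]$ of degree $n$ splits into $n$ distinct linear factors over $\F_{q^n}$ (its splitting field is $\F_{q^n}$, and it is separable since it is irreducible over a finite field), so it contributes exactly $n$ roots, all of degree $n$. Distinct monic irreducibles are coprime, hence have no common roots; and every element of degree $n$ is the root of a unique such polynomial, namely its minimal polynomial. Therefore the number of elements of $\F_{q^n}$ of degree exactly $n$ equals $n\,I_q(n)$.

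Combining the two steps, $|G_{q,n}|=|\F_{q^n}|-|\F_{q^n}\setminus G_{q,n}|=q^n-n\,I_q(n)$, which is the claim. There is no real obstacle here; the only point requiring a little care is the equivalence in the first step, namely checking that membership in \emph{some} proper subfield is the same as having degree a proper divisor of $n$ (this uses both directions of the subfield criterion together with the fact that $[\F_{q^t}:\F_q]=t$ and the lattice of subfields of $\F_{q^n}$ is indexed by divisors of $n$). One could alternatively phrase the whole argument via Gauss's Formula (Proposition 1) by noting $\sum_{d\mid n} d\,I_q(d)=q^n$, which immediately gives $q^n-n I_q(n)=\sum_{d\mid n,\;d<n} d\,I_q(d)=|G_{q,n}|$; I would mention this as a sanity check rather than make it the primary line of reasoning.
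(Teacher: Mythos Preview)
Your proof is correct and follows essentially the same approach as the paper: identify the complement $\F_{q^n}\setminus G_{q,n}$ with the set of roots of monic irreducibles of degree exactly $n$ over $\F_q$, then count these as $n\,I_q(n)$. The paper's proof is a one-line version of your argument (``any root of an irreducible polynomial of degree $n$ over $\F_q$ cannot lie in a proper subfield''), and your added care about the subfield criterion and separability simply fills in details the paper leaves implicit.
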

\begin{proof}
It is clear that any root of an irreducible polynomial of degree $n$ over $\F_q$ can not be an element of its proper subfield. Hence the proof follows.
\end{proof}
\begin{example} Consider $\F_{q^n}$ for $n=105$. Then we have $d=1,3,5,7,15,21,35,105$ and by Lemma \ref{lem:G} we get 
$$| G_{q,n} | = q^{35}+q^{21}+q^{15}-q^7-q^5-q^3+q.$$
\end{example}

Now we give the definition of a norm and trace of an element in a finite field, see \cite[Chapter 2]{LN1997} for their properties.

\begin{definition}
For $\alpha \in \F_{q^n}$ the \textit{norm} $\Norm_{F_{q^n}/F_q}(\alpha)$ of $\alpha$ is defined by
$$\Norm_{F_{q^n}/F_q}(\alpha) = \alpha \cdot \alpha^q\cdot\alpha^{q^2}\cdots\alpha^{q^{n-1}}=\alpha^{(q^n-1)/(q-1)},$$
and the \textit{trace} $\Tr_{F_{q^n}/F_q}(\alpha)$ of $\alpha$  is defined by $$\Tr_{F_{q^n}/F_q}(\alpha)=\alpha+\alpha^{q}+\cdots+\alpha^{q^{n-1}}.$$
\end{definition}
In particular $\Norm_{F_{q^n}/F_q}(\alpha)$ and $\Tr_{F_{q^n}/F_q}(\alpha)$ are elements of $\F_{q}$.

\begin{definition}\cite[Chapter 5]{LN1997} Let $\chi$ be an additive and $\psi$ a multiplicative character of $\F_{q}$. Then $\chi$ and $\psi$ can be \textit{lifted} to $\F_{q^n}$ by setting $\chi'=\chi(\tau_{\F_{q^n}/\F_q}(\beta))$ for $\beta \in \F_{q^n}$ and $\psi'=\psi(N_{\F_{q^n}/\F_q}(\beta))$ for $\beta \in \F_{q^n}^{*}$. Also from the additivity of the trace and multiplicativity of the norm $\chi'$ is an additive and $\psi^{'}$ is a multiplicative character of $\F_{q^n}$.
\end{definition} 
By the definition of lifted character Gyarmati gave the following corollary in her paper \cite{G2015}. We give this corollary to use in the proof of Theorem \ref{thm:main}.
\begin{corollary}\cite[Corollary 2.1.]{G2015} Let $p>2$ be a prime number and $\left(\frac{.}{p}\right)$ be the Legendre symbol. Let $\gamma$ be the quadratic character of $\F_{p^n}$. Then for $\alpha \in \F^{*}_{p^n}$  $$\gamma(\alpha)=\left(\frac{\Norm_{F_{p^n}/F_p}(\alpha)}{p}\right)$$
\end{corollary}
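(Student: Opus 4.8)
The plan is to reduce the statement to two facts: the quadratic character of a finite field is unique, and the explicit formula for the norm recorded in the definition above. Since $p>2$, the cyclic group $\F_{p^n}^{*}$ has even order $p^n-1$, hence a unique subgroup of index $2$ and therefore a unique multiplicative character of order $2$, which is $\gamma$. Likewise $\left(\frac{\cdot}{p}\right)$ is the unique quadratic character $\psi$ of $\F_p$. So it is enough to show that the lifted character $\psi'=\psi\circ\Norm_{F_{p^n}/F_p}$ is a character of order exactly $2$ on $\F_{p^n}^{*}$; then $\psi'=\gamma$, which is precisely the claimed identity.

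First I would note that $\psi'^{2}=\psi^{2}\circ\Norm_{F_{p^n}/F_p}$ is trivial because $\psi^{2}$ is, so $\psi'$ has order $1$ or $2$. To exclude order $1$, I would use that $\Norm_{F_{p^n}/F_p}\colon\F_{p^n}^{*}\to\F_p^{*}$ is surjective — immediate from $\Norm_{F_{p^n}/F_p}(\alpha)=\alpha^{(p^n-1)/(p-1)}$, which sends a generator of $\F_{p^n}^{*}$ to a generator of $\F_p^{*}$ — so choosing $\alpha$ with $\Norm_{F_{p^n}/F_p}(\alpha)$ a non-square in $\F_p^{*}$ gives $\psi'(\alpha)=-1$. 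Alternatively, one obtains the identity directly: by Euler's criterion (an equality in $\F_p$, since $1\neq-1$ there because $p>2$) together with the norm formula,
$$\left(\frac{\Norm_{F_{p^n}/F_p}(\alpha)}{p}\right)=\left(\alpha^{(p^n-1)/(p-1)}\right)^{(p-1)/2}=\alpha^{(p^n-1)/2}=\gamma(\alpha),$$
the last equality holding because $\alpha\mapsto\alpha^{(p^n-1)/2}$ sends a generator of $\F_{p^n}^{*}$ to the unique element of order $2$, namely $-1\in\F_p\subseteq\F_{p^n}$, and is therefore the order-$2$ character.

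The only point requiring care is the bookkeeping of the two meanings of $\pm1$: the Legendre symbol is nominally integer-valued, and its values must be identified consistently with $1,-1\in\F_p\subseteq\F_{p^n}$, which is legitimate exactly because $p>2$. Apart from that, everything is routine; the essential inputs are the explicit description of the norm (already available in the definition above) and the uniqueness of the quadratic character, and I would not expect any serious obstacle.
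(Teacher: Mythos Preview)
Your argument is correct. The paper does not give its own proof of this corollary: it is quoted from \cite{G2015} and presented as an immediate consequence of the preceding definition of the lifted multiplicative character $\psi'=\psi\circ\Norm_{\F_{q^n}/\F_q}$. Your proof is exactly the fleshing-out of that one-line remark --- you take $\psi=\left(\frac{\cdot}{p}\right)$, observe that $\psi'$ is multiplicative of order dividing $2$, and then supply the one point the paper leaves implicit, namely that $\psi'$ is nontrivial (via surjectivity of the norm, or equivalently via the Euler-criterion computation). So your approach coincides with the paper's intended one; you have simply made the verification that the lift has order exactly $2$ explicit.
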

In the following we define two new polynomials for a given polynomial over a finite field.
\begin{definition}
For $f(x)=a_kx^k+a_{k-1}x^{k-1}+\cdots+a_0 \in \F_{q^n}[x]$,  we define 
$$\tau_s(f)(x):=a_k^{q^s}x^k+a_{k-1}^{q^s}x^{k-1}+\cdots+a_0^{q^s} \in \F_{q^n}[x]$$ for  $0\le s \le n-1$ 
and
$$N_{F_{q^n}/F_q}(f):=\tau_0(f).\tau_1(f).\tau_2(f)\cdots\tau_{n-1}(f) \in \F_{q^n}[x].$$
\end{definition}
Next, we give a result from the book of Lidl and Niederreiter \cite{LN1997}, which will be the basis of the proof of our main theorem.
\begin{lemma}\cite[Exercise 5.64]{LN1997}\label{lem:LN} Let $i_1,\dots,i_j$ be distinct elements of $\F_{p^k}$, p odd, and  $\epsilon_1,\dots,\epsilon_j \in \{-1,+1\}$. Let $N(\epsilon_1,\dots,\epsilon_j)$ denote the number of $\alpha \in \F_{p^k}$ with
$$\gamma(\alpha+ i_s)=\epsilon_s\mbox{ for }s=1,2,\dots,j$$
where $\gamma$ is the quadratic character of $\F_{p^k}$. Then,
$$| N(\epsilon_1,\dots,\epsilon_j)-\frac{p^k}{2^j} | \le \bigg(\frac{j-2}{2}+\frac{1}{2^j}\bigg)p^{k/2} + \frac{j}{2}.$$
\end{lemma}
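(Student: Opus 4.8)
The plan is to rewrite $N(\epsilon_1,\dots,\epsilon_j)$ as a combination of multiplicative character sums over $\F_{p^k}$ and then estimate those sums using Weil's bound. First I would observe that for $\alpha\in\F_{p^k}$ with $\alpha\neq -i_s$ we have $\gamma(\alpha+i_s)\in\{-1,1\}$, so $\tfrac12\bigl(1+\epsilon_s\gamma(\alpha+i_s)\bigr)$ equals $1$ when $\gamma(\alpha+i_s)=\epsilon_s$ and $0$ otherwise. Since a value $\alpha=-i_t$ never meets all the conditions (there $\gamma(\alpha+i_t)=\gamma(0)=0\neq\epsilon_t$), only the $\alpha\notin\{-i_1,\dots,-i_j\}$ contribute, and
$$N(\epsilon_1,\dots,\epsilon_j)=\sum_{\substack{\alpha\in\F_{p^k}\\ \alpha\neq -i_1,\dots,-i_j}}\prod_{s=1}^{j}\frac{1+\epsilon_s\gamma(\alpha+i_s)}{2}.$$
I would then restore the missing points: setting $\Sigma=\sum_{\alpha\in\F_{p^k}}\prod_{s=1}^{j}\frac{1+\epsilon_s\gamma(\alpha+i_s)}{2}$ and letting $\Delta$ be the sum of the same product over $\alpha\in\{-i_1,\dots,-i_j\}$, one gets $N(\epsilon_1,\dots,\epsilon_j)=\Sigma-\Delta$. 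At a point $\alpha=-i_t$ one factor of the product equals $\tfrac12$ while every other factor lies in $\{0,\tfrac12,1\}$, so each of the $j$ terms of $\Delta$ lies in $[0,\tfrac12]$, giving $0\le\Delta\le j/2$; this is exactly what produces the additive $j/2$ in the claimed bound.

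Next I would expand $\Sigma$. Multiplying out the product and using multiplicativity of $\gamma$ (with the convention $\gamma(0)=0$, which stays consistent since a factor $\gamma(\alpha+i_s)=0$ forces both sides to vanish), one obtains
$$\Sigma=\frac{1}{2^{j}}\sum_{S\subseteq\{1,\dots,j\}}\Bigl(\prod_{s\in S}\epsilon_s\Bigr)\sum_{\alpha\in\F_{p^k}}\gamma\Bigl(\prod_{s\in S}(\alpha+i_s)\Bigr).$$
The term $S=\emptyset$ contributes $p^{k}/2^{j}$. For $S\neq\emptyset$, the polynomial $f_S(x)=\prod_{s\in S}(x+i_s)$ has degree $|S|\ge1$ and, because the $i_s$ are distinct, is squarefree and hence not a constant multiple of the square of a polynomial; Weil's bound for multiplicative character sums, \cite[Theorem~5.41]{LN1997}, then gives $\bigl|\sum_{\alpha\in\F_{p^k}}\gamma(f_S(\alpha))\bigr|\le(|S|-1)p^{k/2}$. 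Summing the contributions of all nonempty $S$ and using the elementary identity $\sum_{m=1}^{j}\binom{j}{m}(m-1)=j2^{j-1}-2^{j}+1$ yields
$$\Bigl|\Sigma-\frac{p^{k}}{2^{j}}\Bigr|\le\frac{1}{2^{j}}\bigl(j2^{j-1}-2^{j}+1\bigr)p^{k/2}=\Bigl(\frac{j-2}{2}+\frac{1}{2^{j}}\Bigr)p^{k/2}.$$
Combining this with $0\le\Delta\le j/2$ and the triangle inequality applied to $N(\epsilon_1,\dots,\epsilon_j)=\Sigma-\Delta$ gives the stated estimate.

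The only substantial ingredient is the Weil bound on the sums $\sum_{\alpha}\gamma(f_S(\alpha))$; everything else is bookkeeping. The one place that needs genuine care is the treatment of the points $\alpha=-i_s$ where $\gamma$ vanishes: handling them separately rather than trying to force the indicator identity to hold there is what keeps the total error as small as $\bigl(\tfrac{j-2}{2}+\tfrac1{2^{j}}\bigr)p^{k/2}+\tfrac{j}{2}$, and verifying the combinatorial identity $\sum_{m=1}^{j}\binom{j}{m}(m-1)=j2^{j-1}-2^{j}+1$ (e.g.\ from $\sum_m\binom{j}{m}m=j2^{j-1}$ and $\sum_m\binom{j}{m}=2^{j}$) is the only computation worth carrying out in detail.
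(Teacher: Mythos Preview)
Your argument is correct and is exactly the standard route to this estimate: write the count via the indicator $\prod_s\tfrac12(1+\epsilon_s\gamma(\alpha+i_s))$, separate the $j$ points $\alpha=-i_t$, expand over subsets, and apply Weil's bound (Theorem~5.41 in \cite{LN1997}) to each nontrivial character sum. The combinatorial identity and the triangle inequality finish it as you describe.

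There is nothing to compare here: the paper does not prove this lemma at all, it merely quotes it as \cite[Exercise~5.64]{LN1997} and uses it as a black box. Your write-up is precisely the intended solution of that exercise. One cosmetic remark: at $\alpha=-i_t$ the remaining factors are actually in $\{0,1\}$ (since $i_s\neq i_t$ forces $\gamma(-i_t+i_s)\in\{-1,1\}$), not $\{0,\tfrac12,1\}$; this only sharpens the observation that each term of $\Delta$ lies in $[0,\tfrac12]$ and changes nothing in the bound.
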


\section{Previous Results}
\label{sec:previous}
In this paper we improve bound given by Gyarmati \cite{G2015} on family complexity of Legendre sequences generated by irreducible polynomials. We will give construction method and result given by Gyarmati in this section.
We begin with the definition of well known Legendre sequence \cite{GMS2004,AMS1997}.
\begin{construction} \label{construction}
Let $K\ge 1$ be an integer and $p$ be a prime number. If $f \in \F_p[x]$ is a polynomial with degree $1\le k\le K$ and has no multiple zeros in $\F_p$, then define the binary sequence $E_p(f)=E_p=(e_1,\dots,e_p)$ by

\begin{displaymath}\label{1}
e_n = \left\{ \begin{array}{ll}
{\left(\frac{f(n)}{p}\right)} & \textrm{ for $(f(n),p)=1$}\\
+1 & \textrm{ for $p | f(n)$}
\end{array} \right.
\end{displaymath} Let $\sF(K,p)$ denote the set of all sequences obtained in this way. 
\end{construction}
Hoffstein and Lieman \cite{HF2001} presented the use of the polynomials $f$ given in Construction \ref{construction} but they did not give a proof for its pseudorandom properties. Goubin, Mauduit and Sárközy \cite{GMS2004} proved that the sequences obtained in this way have strong pseudorandom properties.

We now give the definition of the $f$-$complexity$ of a family $\sF$, which was first defined by Ahlswede et.~al.~\cite{AKMS2003} in 2003.
\begin{definition} 
The family complexity (or briefly \textit{$f$-complexity}) $C(\sF)$ of a family $\sF$ of binary sequences $E_N \in \{-1,+1\}^N$ of length $N$ 
is the greatest integer $j \geq 0$ such that for any $1 \leq i_1 < i_2< \cdots < i_j \leq N$ and any $\epsilon_1,\epsilon_2, \ldots,  \epsilon_j \in \{-1,+1\}$ 
there is a sequence $E_N = \{e_1,e_2,\ldots , e_N\}\in \sF$ with $$e_{i_1}=\epsilon_1,e_{i_2}=\epsilon_2, \ldots ,e_{i_j}=\epsilon_j.$$
The $f$-$complexity$ of a family $\sF$ is denoted by $\Gamma(F)$. 
\end{definition}
We note that the trivial upper bound on family complexity $\Gamma(\sF)$ in terms of family size $|\sF|$ is
\be \label{eqn.bound_f} \nn
2^{\Gamma(\sF)} \leq |\sF|.
\ee

We set the family of Legendre sequences generated by irreducible polynomials of degree $k$ over a prime field $\F_p$ by $\sF_{irred}(k,p)$:
$$\sF_{irred}(k,p):=\{E_p(f) : f \in \F_p\mbox{ monic irreducible polynomial with degree k}\}$$
This family has been studied for different measures (crosscorrelation etc.) in several papers \cite{GMS2004,G20092,GMS2014}.

Gyarmati \cite{G2015} recently proved a lower bound on the $f$-complexity of the family $\sF_{irred}(k,p)$, which says that the $f$-complexity is at least of order $\frac{p^{1/4}}{20\log{2}}$.

\begin{theoremA} \nn \cite{G2015} \label{thm:gyarmati} Let p be an odd prime and k be a positive integer. Define $c=\frac{1}{2}$ if $k\le\frac{p^{1/4}}{10\log p}$ and $c=\frac{5}{2}$ if $k>\frac{p^{1/4}}{10\log p}$ then
\begin{center} $\Gamma(\sF_{irred}(k,p)) \ge min\{p, \frac{k-c}{2\log 2}\log p\}.$
\end{center}
\end{theoremA}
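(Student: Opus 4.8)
The plan is a direct counting argument: for a prescribed pattern I would count the monic irreducible $f\in\F_p[x]$ of degree $k$ that realise it and show this count is positive as soon as the number of prescribed positions does not exceed the stated bound. Put $j=\lfloor\min\{p,\tfrac{k-c}{2\log 2}\log p\}\rfloor$; if $j\le 0$ there is nothing to prove, and $\sF_{irred}(k,p)\neq\emptyset$ because $I_p(k)\ge 1$, so assume $j\ge 1$. Fix $1\le i_1<\cdots<i_j\le p$ and $\epsilon_1,\dots,\epsilon_j\in\{-1,+1\}$; I must find a monic irreducible $f$ of degree $k$ with $\left(\tfrac{f(i_s)}{p}\right)=\epsilon_s$ for $1\le s\le j$. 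For $k\ge 2$ an irreducible $f$ of degree $k$ has no root in $\F_p$, so the value $+1$ of Construction~\ref{construction} never occurs; the case $k=1$ (shifts of the Legendre sequence) is covered by the same argument, as $\F_p$ has no proper subfield.

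First I would translate the condition into one on the quadratic character $\gamma$ of $\F_{p^k}$. If $\alpha\in\F_{p^k}$ is a root of a monic irreducible $f$ of degree $k$, then $f(x)=\prod_{t=0}^{k-1}(x-\alpha^{p^{t}})$; since each $i_s$ (read mod $p$) is Frobenius-fixed, $i_s-\alpha^{p^{t}}=-(\alpha-i_s)^{p^{t}}$, hence
\[
f(i_s)=(-1)^{k}\prod_{t=0}^{k-1}(\alpha-i_s)^{p^{t}}=(-1)^{k}\,\Norm_{F_{p^k}/F_p}(\alpha-i_s).
\]
By the corollary of \cite{G2015} recalled in Section~\ref{sec:pre}, $\left(\tfrac{\Norm_{F_{p^k}/F_p}(\beta)}{p}\right)=\gamma(\beta)$ for $\beta\in\F_{p^k}^{*}$, so with the fixed sign $\delta=\left(\tfrac{-1}{p}\right)^{k}\in\{-1,+1\}$ we obtain $\left(\tfrac{f(i_s)}{p}\right)=\delta\,\gamma(\alpha-i_s)$. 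Thus it suffices to produce $\alpha\in\F_{p^k}$ \emph{of degree exactly $k$ over $\F_p$} with $\gamma(\alpha-i_s)=\delta\epsilon_s$ for all $s$: its minimal polynomial is then the desired $f$, and since $p$ is odd each of the $k$ Frobenius conjugates of $\alpha$ satisfies the same conditions, consistently with all of them having this minimal polynomial.

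Next I would count. The residues of $i_1,\dots,i_j$ modulo $p$ are distinct elements of $\F_p\subseteq\F_{p^k}$, so Lemma~\ref{lem:LN}, applied with the elements $-i_1,\dots,-i_j$ and signs $\delta\epsilon_1,\dots,\delta\epsilon_j$, shows that the number of $\alpha\in\F_{p^k}$ with $\gamma(\alpha-i_s)=\delta\epsilon_s$ for every $s$ is at least $\tfrac{p^{k}}{2^{j}}-\big(\tfrac{j-2}{2}+\tfrac1{2^{j}}\big)p^{k/2}-\tfrac{j}{2}$. Every element of $\F_{p^k}$ of degree $<k$ lies in $\bigcup_{d\mid k,\ d<k}\F_{p^{d}}$, of size at most $\sum_{d=1}^{\lfloor k/2\rfloor}p^{d}<\tfrac{p}{p-1}p^{\lfloor k/2\rfloor}\le 2p^{k/2}$ (Lemma~\ref{lem:G} gives the exact value $p^{k}-kI_p(k)$, which would only sharpen the bound). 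Subtracting, the number of degree-$k$ elements $\alpha$ with $\gamma(\alpha-i_s)=\delta\epsilon_s$ for all $s$ is at least
\[
\frac{p^{k}}{2^{j}}-\Big(\frac{j-2}{2}+\frac{1}{2^{j}}+2\Big)p^{k/2}-\frac{j}{2}.
\]

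The last step — and, I expect, the only real difficulty — is to verify that this quantity is positive. From $j\le\tfrac{k-c}{2\log 2}\log p$ one gets $2^{j}\le p^{(k-c)/2}$ (this persists when $j$ is capped at $p$, for then $(k-c)\log p\ge 2p\log 2$), so $p^{k}/2^{j}\ge p^{(k+c)/2}$, and after dividing by $p^{k/2}$ it is enough that $p^{c/2}>\tfrac{j}{2}+1+\tfrac1{2^{j}}+\tfrac{j}{2p^{k/2}}$; for $k\ge 2$ the last three terms are at most $2$ (the case $k=1$ being easier still), so it suffices that $p^{c/2}>\tfrac{j}{2}+2$. If $k>\tfrac{p^{1/4}}{10\log p}$, so $c=\tfrac52$, this follows from $j\le p$ and $p^{5/4}>\tfrac p2+2$, valid for every odd $p$; if $k\le\tfrac{p^{1/4}}{10\log p}$, so $c=\tfrac12$, then $j\le\tfrac{k\log p}{2\log 2}\le\tfrac{p^{1/4}}{20\log 2}$ and $\tfrac{j}{2}+2<p^{1/4}$ since $\tfrac1{40\log 2}<1$ (this regime arises only for enormous $p$). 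The threshold $\tfrac{p^{1/4}}{10\log p}$ and the two values of $c$ are designed precisely so that the error term of Lemma~\ref{lem:LN} stays below $p^{k}/2^{j}$. Once positivity holds, a degree-$k$ element $\alpha$ as above exists, its minimal polynomial $f$ lies in $\sF_{irred}(k,p)$ and realises the pattern, and since the pattern was arbitrary we conclude $\Gamma(\sF_{irred}(k,p))\ge j$, the asserted bound.
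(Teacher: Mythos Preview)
The paper does not contain a proof of Theorem~A; it is quoted from \cite{G2015} as a previous result. Your argument is correct and is precisely the strategy the present paper employs to prove its own Theorem~\ref{thm:main}: lift the Legendre-symbol constraints to conditions on the quadratic character $\gamma$ of $\F_{p^k}$ via the norm, apply Lemma~\ref{lem:LN} to lower-bound the number of $\alpha\in\F_{p^k}$ meeting the sign pattern, subtract off the elements of degree $<k$ (your $\bigcup_{d\mid k,\,d<k}\F_{p^d}$ is exactly $G_{p,k}$), and take the minimal polynomial of any surviving $\alpha$.

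The only substantive difference from the paper's proof of Theorem~\ref{thm:main} is the coarseness of the subtraction: you use the crude estimate $|G_{p,k}|\le 2p^{k/2}$, which is what produces Gyarmati's constants $c\in\{\tfrac12,\tfrac52\}$, whereas Theorem~\ref{thm:main} keeps the exact value $|G_{p,k}|=p^k-kI_p(k)$ from Lemma~\ref{lem:G} and solves the resulting inequality with the Lambert $W$ function (Lemma~\ref{lem:W} and Lemma~\ref{lem:CY}). A cosmetic point: the paper builds the irreducible polynomial as $g(x)=N_{\F_{p^k}/\F_p}(x+\alpha)$, which sidesteps your sign correction $\delta=\left(\tfrac{-1}{p}\right)^{k}$; your route via the minimal polynomial of $\alpha$ and the adjusted targets $\delta\epsilon_s$ is equally valid since Lemma~\ref{lem:LN} is insensitive to the sign pattern.
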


In the next section,  we improve the lower bound given in Theorem A by using the formula $\lvert G_{p,k} \rvert$ given in Lemma \ref{lem:G} and Lambert $W$ function given in Definition \ref{def:W}.

\section{Main Method}
 \label{sec:contribution}

The main contribution of this paper is given in the following theorem, which is a new bound on the family complexity of Legendre sequences generated by irreducible polynomials. This new bound improves the bound given by Gyarmati \cite{G2015}. The comparison of two bounds is given in the next section.

\begin{theorem} \label{thm:main} Let $p$ be an odd prime and $k$ be a positive integer. Let  $A$ and $B$ be defined as
$$A=\frac{2p^{k/2}-2}{1+p^{-k/2}} \mbox{ and }  B=\frac{2 | G_{p,k}| p^{-k/2}-2}{1+p^{-k/2}}.$$ Then
$$\Gamma(\sF_{irred}(k,p)) \ge \log_{2}\bigg(\frac{A}{W(2^BA)}\bigg)$$
\end{theorem}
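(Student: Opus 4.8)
The plan is to recast the family-complexity condition as a point-counting problem in $\F_{p^k}$ and then to optimize over the length $j$ of the specification; the optimization is exactly the place where the Lambert $W$ function is forced to appear. \emph{Reduction to a character count.} Every monic irreducible $f\in\F_p[x]$ of degree $k$ is the minimal polynomial of some $\alpha\in\F_{p^k}\setminus G_{p,k}$, namely $f(x)=\prod_{t=0}^{k-1}(x-\alpha^{p^t})$, and conversely each such $\alpha$ yields such an $f$. For $i\in\F_p\subseteq\F_{p^k}$ we have $i-\alpha^{p^t}=i^{p^t}-\alpha^{p^t}=(i-\alpha)^{p^t}$, so
$$f(i)=\prod_{t=0}^{k-1}\bigl(i-\alpha^{p^t}\bigr)=\Norm_{\F_{p^k}/\F_p}(i-\alpha)=(-1)^k\,\Norm_{\F_{p^k}/\F_p}(\alpha-i)$$
(using $\Norm_{\F_{p^k}/\F_p}(-1)=(-1)^k$, valid since $p$ is odd), whence by \cite[Corollary~2.1]{G2015}, for $\alpha\ne i$,
$$\left(\tfrac{f(i)}{p}\right)=\left(\tfrac{(-1)^k}{p}\right)\gamma(\alpha-i),$$
with $\gamma$ the quadratic character of $\F_{p^k}$. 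When $k\ge2$ an irreducible $f$ has no zero in $\F_p$, so the exceptional branch $p\mid f(n)$ in Construction \ref{construction} does not arise (the case $k=1$, where $G_{p,1}=\emptyset$, is checked directly). Thus, for a specification with distinct positions $i_1,\dots,i_j\in\F_p$ and signs $\epsilon_1,\dots,\epsilon_j$, writing $\delta_s=\left(\tfrac{(-1)^k}{p}\right)\epsilon_s$, it is enough to produce a single $\alpha\in\F_{p^k}\setminus G_{p,k}$ with $\gamma(\alpha-i_s)=\delta_s$ for $1\le s\le j$; its minimal polynomial then lies in $\sF_{irred}(k,p)$ and realizes the specification.

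\emph{The counting estimate.} Bound the number of $\alpha\in\F_{p^k}$ with the prescribed values $\gamma(\alpha-i_s)=\delta_s$ from below by Lemma \ref{lem:LN} (applied with $-i_s$ in place of $i_s$), and discard the at most $|G_{p,k}|$ of them lying in $G_{p,k}$, this number being supplied exactly by Lemma \ref{lem:G}. Since the count of admissible $\alpha$ outside $G_{p,k}$ is a nonnegative integer, it is $\ge1$ — and the specification is realizable — as soon as
$$\frac{p^k}{2^j}-\left(\frac{j-2}{2}+\frac1{2^j}\right)p^{k/2}-\frac j2\ >\ |G_{p,k}|.$$
Setting $P=p^{k/2}$, collecting the $2^{-j}$-terms, using $\tfrac{j-2}{2}+\tfrac1{2^j}\le\tfrac j2$, and multiplying through by $2/(P+1)$, this rearranges to an inequality of the shape $\dfrac{A}{2^j}\ge j+B$, in which $A=\dfrac{2P(P-1)}{P+1}$ and $B=\dfrac{2(|G_{p,k}|-P)}{P+1}$ coincide (one checks) with the quantities in the statement. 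Hence $\Gamma(\sF_{irred}(k,p))$ is at least the largest integer $j$ for which $A\ge 2^j(j+B)$.

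\emph{Solving for the threshold $j$.} Since $B>-2$ (as $0\le|G_{p,k}|<p^k$ by Lemma \ref{lem:G}), the function $j\mapsto 2^j(j+B)$ is increasing in the relevant range, so the optimal $j$ is governed by the equation $A=2^j(j+B)$. Substituting $u=j+B$ turns this into a relation of the canonical form $z=w\,e^{w}$ (after the usual multiplication by $\ln 2$), which by Definition \ref{def:W} is inverted by $w=W(z)$; unwinding the substitution and simplifying with the identity $W(z)e^{W(z)}=z$ produces the closed-form lower bound $\log_2\!\bigl(A/W(2^{B}A)\bigr)$ asserted in the theorem. Finally one verifies that this is positive — i.e.\ $W(2^{B}A)<A$, equivalently $B<A/\ln 2$, which follows from $|G_{p,k}|<P^2$ together with $\ln 2<1$ — so the bound is non-vacuous for every odd prime $p$ and positive integer $k$.

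I expect the genuine difficulty to be this final inversion and the bookkeeping feeding into it: because $j$ occurs both in the exponent $2^j$ and linearly in $j+B$, the threshold equation has no elementary solution and the Lambert $W$ function is unavoidable, and one must ensure the argument $2^{B}A$ is real and positive so the principal (single-valued) branch applies, while tracking the lower-order terms ($\tfrac j2$, $2^{-j}$, and the slack between "$>$" and "$\ge$") carefully enough that they are absorbed into $A$ and $B$ rather than degrading the closed form. By comparison, the reduction and the counting estimate are routine, resting on \cite[Corollary~2.1]{G2015}, Lemma \ref{lem:G} and Lemma \ref{lem:LN}.
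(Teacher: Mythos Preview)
Your approach is essentially the paper's: reduce the specification to finding $\alpha\in\F_{p^k}\setminus G_{p,k}$ with prescribed quadratic-character values, invoke the count of Lemma~\ref{lem:LN}, subtract $|G_{p,k}|$, rearrange to $2^j(j+B)\le A$, and invert via Lambert $W$ (the paper isolates the last two steps as Lemmas~\ref{lem:CY} and~\ref{lem:W}). One small slip: drop the estimate $\tfrac{j-2}{2}+\tfrac1{2^j}\le\tfrac j2$---the stated $A,B$ come from an \emph{exact} rearrangement after collecting the $2^{-j}$-terms into $\tfrac{P(P-1)}{2^j}$, and actually applying that inequality would shift both constants by $\tfrac{2P}{P+1}$ away from the theorem's values; note also that the paper avoids your sign factor $\bigl(\tfrac{(-1)^k}{p}\bigr)$ by working with $f(x)=x+\alpha$ rather than the minimal polynomial of $\alpha$.
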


Before proving the theorem, we will give two auxiliary lemmas. In the first lemma, the solution of a logarithmic equation is obtained by Lambert $W$ function. In the second lemma, we give an upper bound on $j$ such that $| G_{p,k}| < N(\epsilon_1,\dots,\epsilon_j)$.
\begin{lemma} \label{lem:W} Let $A,B \in \R$. If
$Bx+ x\log_{2}x - A = 0$, then $x = \dd\frac{A}{W(A2^B)}$
\end{lemma}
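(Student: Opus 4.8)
The plan is to solve the equation $Bx + x\log_2 x - A = 0$ directly by massaging it into the canonical form $z = W(z)e^{W(z)}$, i.e. into the shape "something $= u e^u$" or equivalently "$u \log u = v$", and then apply Definition \ref{def:W}. First I would convert the base-$2$ logarithm to the natural logarithm, writing $\log_2 x = \frac{\ln x}{\ln 2}$, so the equation becomes $Bx + \frac{x\ln x}{\ln 2} = A$, hence $Bx\ln 2 + x\ln x = A\ln 2$, and then $x(\ln x + B\ln 2) = A\ln 2$, that is $x\ln(2^B x) = A\ln 2$.

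Next I would introduce the substitution $u = 2^B x$ (so $x = 2^{-B}u$) to absorb the additive constant inside the logarithm: the equation turns into $2^{-B} u \ln u = A \ln 2$, i.e. $u\ln u = A\,2^B\ln 2 = \ln\!\big(2^{A 2^B}\big)$. This is exactly the form $u\log u = y$ (with natural log), whose solution, as recalled right after Definition \ref{def:W}, is $u = \frac{y}{W(y)}$ with $y = A 2^B \ln 2$. Equivalently, set $u = e^{w}$; then $w e^{w} = A 2^B \ln 2$, so $w = W(A 2^B \ln 2)$ and $u = e^{W(A2^B\ln 2)} = \frac{A2^B\ln 2}{W(A2^B\ln 2)}$. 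Using the stated identity $ze^{W(z)} $... more cleanly: from $w e^w = z$ with $z = A2^B\ln 2$ we get $e^w = z/w = z/W(z)$.

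Then I would back-substitute $x = 2^{-B}u = 2^{-B}\cdot \frac{A 2^B \ln 2}{W(A 2^B \ln 2)} = \frac{A\ln 2}{W(A 2^B\ln 2)}$. To match the claimed form $x = \frac{A}{W(A 2^B)}$, I would finally simplify the argument of $W$ using the scaling property of the Lambert function: from $w e^w = A 2^B \ln 2$, writing $w = \ln 2 \cdot v$ does not immediately help, so instead I note $W(t\ln t)=\ln t$; applying this with the observation that one can rewrite $A2^B\ln 2$ so that the $\ln 2$ factors cancel against an implicit $\ln 2$ coming from the base-$2$ logarithm — concretely, the cleanest route is to keep $\log_2$ throughout: from $x\log_2(2^B x) = A$ set $u = 2^B x$, get $2^{-B}u\log_2 u = A$, so $u \log_2 u = A2^B$; writing $u = 2^{w}$ gives $2^w\cdot w = A 2^B$, i.e. $(w\ln 2)e^{w\ln 2} = A2^B\ln 2$, whence $w\ln 2 = W(A2^B\ln 2)$...

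The honest statement is that, working consistently in base $2$, one gets $u\log_2 u = A2^B$, and the base-$2$ analogue of $W$ would give $x = \frac{A}{W(A2^B)}$ only if $W$ here is interpreted with the matching normalization; since the paper's $W$ is the natural-log one, the cleanest correct derivation is: $x\log_2(2^Bx)=A \iff (2^Bx\ln 2)\,e^{\,\ln(2^Bx)} \cdot \tfrac{1}{2^B\ln 2}= A$ — this is getting circular, so in the writeup I would simply substitute $x = \frac{A}{W(A2^B)}$ back into $Bx + x\log_2 x - A$ and verify it vanishes, using the defining relation $W(A2^B)e^{W(A2^B)} = A2^B$ to simplify $\log_2 x = \log_2 A - \log_2 W(A2^B)$; the identity $e^{W(z)} = z/W(z)$ gives $\log_2 W(A2^B) = \log_2(A2^B) - W(A2^B)/\ln 2$, and plugging in collapses the expression to $0$. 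The main obstacle is purely bookkeeping: tracking the $\ln 2$ factors correctly between the base-$2$ logarithm in the hypothesis and the natural-logarithm convention in Definition \ref{def:W}, which is why I would present the proof as a verification (substitute and check) rather than a forward derivation, making the algebra unambiguous.
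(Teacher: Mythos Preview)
Your derivation is essentially the paper's: the paper rewrites $Bx+x\log_2 x=A$ as $2^Bx\,\log_2(2^Bx)=2^BA$ (exactly your $u\log_2 u=A\,2^B$ with $u=2^Bx$) and then simply invokes the identity stated after Definition~\ref{def:W}, ``$y=z\log z$ is solved by $z=y/W(y)$'', to read off $2^Bx=\dfrac{2^BA}{W(2^BA)}$ and hence $x=\dfrac{A}{W(2^BA)}$. So the clean forward route you were looking for is just to stop at $u\log_2 u=A\,2^B$ and apply that identity; no substitution-and-check is needed.

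Your worry about the $\ln 2$ bookkeeping is not a defect in your argument but a genuine observation about the statement itself: with the standard Lambert $W$ (inverse of $z\mapsto ze^{z}$), the equation $u\log_2 u=A\,2^B$ gives $u\ln u=A\,2^B\ln 2$ and hence $u=\dfrac{A\,2^B\ln 2}{W(A\,2^B\ln 2)}$, i.e.\ $x=\dfrac{A\ln 2}{W(A\,2^B\ln 2)}$. The paper is tacitly applying the ``$z\log z$'' formula with $\log=\log_2$, which amounts to using a base-$2$ normalized $W$. Your proposed verification by substitution would therefore not ``collapse to $0$'' with the natural-log $W$: you would end up needing $w\cdot 2^{w}=A\,2^B$ rather than $w\,e^{w}=A\,2^B$. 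In short, follow the paper's two-line argument, and if you want to be fully precise, either note that $W$ here is the base-$2$ analogue or record the $\ln 2$ factors explicitly.
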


\begin{proof} We have
$$x(B+ \log_{2}x) = A$$ or equivalently,
$$2^Bx(B+ \log_{2}x) = 2^BA.$$ Then we get
$$2^Bx(\log_{2}2^B + \log_{2}x) = 2^BA$$ and
$$2^Bx(\log_2(2^Bx)) = 2^BA.$$
Thus by Definition \ref{def:W} we have
$$ 2^Bx = \frac{2^BA}{W(2^BA)},$$ that is
$$ x = \frac{A}{W(2^BA)}.$$
\end{proof}

\begin{lemma} \label{lem:CY} Let $p$ be an odd prime and $k$ be a positive integer. Let $|G_{p,k}|$ be defined as in Lemma \ref{lem:G}. Let  $A$ and $B$ be defined as
$$A=\frac{2p^{k/2}-2}{1+p^{-k/2}} \mbox{ and }  B=\frac{2 | G_{p,k}| p^{-k/2}-2}{1+p^{-k/2}}.$$ Let $j$ be an integer such that $j < log_2\bigg(\frac{A}{W(2^BA)}\bigg)$. Let $\epsilon_1,\dots,\epsilon_j \in \{-1,+1\}$ and  $N(\epsilon_1,\dots,\epsilon_j)$ be defined as in Lemma \ref{lem:LN}. Then
$$| G_{p,k}| < N(\epsilon_1,\dots,\epsilon_j).$$
\end{lemma}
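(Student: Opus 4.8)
The plan is to combine the lower bound on $N(\epsilon_1,\dots,\epsilon_j)$ coming from Lemma \ref{lem:LN} with the Lambert $W$ identity of Lemma \ref{lem:W}. First I would rewrite the conclusion of Lemma \ref{lem:LN} as a one-sided estimate: since $|N(\epsilon_1,\dots,\epsilon_j) - p^k/2^j| \le \left(\tfrac{j-2}{2}+\tfrac{1}{2^j}\right)p^{k/2}+\tfrac{j}{2}$, we get
$$N(\epsilon_1,\dots,\epsilon_j) \ge \frac{p^k}{2^j} - \left(\frac{j-2}{2}+\frac{1}{2^j}\right)p^{k/2} - \frac{j}{2}.$$
So it suffices to show that the right-hand side exceeds $|G_{p,k}|$, i.e.
$$\frac{p^k}{2^j} - \left(\frac{j-2}{2}+\frac{1}{2^j}\right)p^{k/2} - \frac{j}{2} > |G_{p,k}|.$$

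Next I would clear denominators by multiplying through by $2^j$ (harmless, $2^j>0$) and collect terms. Writing $x = 2^j$, the inequality becomes, after rearranging,
$$p^k - \frac{(j-2)}{2}p^{k/2}\,x - \frac{1}{2}p^{k/2} - \frac{j}{2}\,x > |G_{p,k}|\,x,$$
and using $j = \log_2 x$ this is an inequality of the shape $p^k - p^{k/2}/2 > (\,|G_{p,k}| + \tfrac12\log_2 x + \tfrac{j-2}{2}p^{k/2}\,)x$. Dividing by $p^{k/2}$ and doing a little bookkeeping, I would massage it into the normalized form
$$B\,x + x\log_2 x - A < 0,$$
with $A$ and $B$ exactly the quantities defined in the statement (the factor $1+p^{-k/2}$ in the denominators of $A,B$ is what absorbs the $x\log_2 x$ coefficient and the $\tfrac{j}{2}x$ term together with the $p^{k/2}x$ term). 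The main obstacle here is purely algebraic: checking that the ad hoc constants $A,B$ are precisely the ones that make the messy inequality collapse to $Bx + x\log_2 x - A < 0$, and in particular that the small linear-in-$j$ nuisance terms ($\tfrac{j}{2}$ and the $\tfrac{j-2}{2}p^{k/2}$ piece) are correctly accounted for; I would carry out this reduction carefully, watching the sign of each term.

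Finally, I would invoke Lemma \ref{lem:W}: the function $g(x) = Bx + x\log_2 x - A$ has $g(x_0)=0$ at $x_0 = A/W(2^B A)$, and since $g$ is increasing in $x$ for $x$ in the relevant range (its derivative $B + \log_2 x + 1/\ln 2$ is positive once $x$ is large enough, which holds here because $j\ge1$ and the parameters are as given), the inequality $g(x) < 0$ is equivalent to $x < x_0$, i.e. to $2^j < A/W(2^B A)$, i.e. to $j < \log_2\!\big(A/W(2^B A)\big)$ — which is exactly the hypothesis. Reading the chain of equivalences backwards then yields $|G_{p,k}| < N(\epsilon_1,\dots,\epsilon_j)$, completing the proof. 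The one point needing care in this last paragraph is the monotonicity of $g$ (equivalently, that we are on the branch of $W$ where the equivalence "$g(x)<0 \iff x<x_0$" is valid), which I would justify by noting that $2^B A > 0$ so the principal branch $W$ applies and $g$ is strictly increasing for all $x$ with $\log_2 x \ge -B - 1/\ln 2$, a condition implied by $j \ge 1$ together with the explicit forms of $A,B$.
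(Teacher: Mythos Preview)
Your plan is correct and follows essentially the same route as the paper: start from the one-sided estimate in Lemma~\ref{lem:LN}, rearrange algebraically to $B\cdot 2^j + 2^j j \ge A$ (the paper does this by contradiction, you do it directly), and then invoke Lemma~\ref{lem:W}. Your version is in fact slightly more careful than the paper's, since you explicitly address the monotonicity of $g(x)=Bx+x\log_2 x - A$ needed to pass from ``$g(2^j)<0$'' to ``$2^j < A/W(2^BA)$'', a step the paper's proof uses without justification; your observation that $B>-2$ (since $|G_{p,k}|\ge 0$) makes this monotonicity hold for all $j\ge 1$.
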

\begin{proof}
Assume that $|G_{p,k}| \ge N(\epsilon_1,\dots,\epsilon_j).$
Then by Lemma \ref{lem:LN}
$$|G_{p,k}| \ge \frac{p^k}{2^j}-p^{k/2}\bigg(\frac{1}{2^j}+\frac{(j-2)}{2}\bigg)-\frac{j}{2}.$$
Divide both sides by $p^{k/2}$
$$|G_{p,k}| p^{-k/2}\ge \frac{p^{k/2}}{2^j}-\bigg(\frac{1}{2^j}+\frac{(j-2)}{2}\bigg)-\frac{jp^{-k/2}}{2}.$$
Multiply both sides by $2(2^j)$, and so get the following equation array
$$2(2^j)|G_{p,k}| p^{-k/2}\ge 2p^{k/2}-2-2^j(j-2)-2^j jp^{-k/2}$$
$$2(2^j) |G_{p,k}| p^{-k/2}-2(2^j)+2^jj+2^jjp^{-k/2}\ge(2p^{k/2}-2)$$
$${(|G_{p,k}| p^{-k/2}-2)}2^j+2^jj(1+p^{-k/2})\ge (2p^{k/2}-2).$$
Divide both sides by $(1+p^{-k/2})$,
$$\frac{(2 |G_{p,k}| p^{-k/2}-2)}{(1+p^{-k/2})}2^j +2^jj \ge \frac{(2p^{k/2}-2)}{(1+p^{-k/2})}.$$
By definition of A and B, we have
$$B2^j +2^jj \ge A.$$
Hence, by Lemma \ref{lem:W} we obtain that
$$2^j \ge \frac{A}{W(2^BA)} \mbox{ or equivalently } j\ge log_2\bigg(\frac{A}{W(2^BA)}\bigg)
,$$
which is a contradiction.
\end{proof}

\begin{proofA}
We need to show the existence of $g \in \F_p[x]$ irreducible polynomial of degree $k$ such that 
$$\left(\frac{g(i_s)}{p}\right)=\epsilon_s\mbox{ for }s=1,2,\dots,j$$
for any tuple $(\epsilon_1,\epsilon_2,\dots,\epsilon_j)\in\{-1,+1\}^j$ and for any integer $j < log_2\left(\frac{A}{W(2^BA)}\right)$.
By Lemma \ref{lem:CY} we know that
$$| G_{p,k}| < N(\epsilon_1,\dots,\epsilon_j).$$
From definition of $N(\epsilon_1,\dots,\epsilon_j)$ we get that there exists $\alpha \in \F_{p^k} \backslash G_{p,k}$ such that

\begin{equation}\label{equ2}\gamma(\alpha+i_s)=\epsilon_s\mbox{ for }s= 1,2,\dots,j.
\end{equation}
Let $f(x)=x+\alpha \in \F_{p^k}[x]$ and we define $g(x):=N_{F_{p^k}/F_p}(f(x)) \in \F_p[x]$. We note that $g$ is an irreducible polynomial by using \cite[Lemma 2.4]{G2015}. 
We know that if p is a prime number, $\left(\frac{.}{p}\right)$ is the Legendre symbol and $\gamma$ is the quadratic character of $\F_{p^k}$ then for $\alpha \in \F^{*}_{p^k}$ we have
$$\gamma(\alpha) = \bigg(\frac{N_{F_{p^k}/F_p}(\alpha)}{p}\bigg).$$
By \cite[Lemma 2.3]{G2015}, we know that if $f\in \F_{p^k}[x]$ then for $\alpha \in \F_p$ we have  $$N_{F_{p^k}/F_p}(f(\alpha))=N_{F_{p^k}/F_p}(f)(\alpha).$$
Finally, using these and (\ref{equ2}) we get
\begin{eqnarray*}
\epsilon_s &=&\gamma(\alpha+i_s)=\gamma(f(i_s))=\left(\frac{N_ {F_{p^k}/F_p}(f(i_s))}{p}\right)=\left(\frac{N_{F_{p^k}/F_p}(f)(i_s))}{p}\right) \\
&=&\left(\frac{g(i_s)}{p}\right)\mbox{ for } s=1,2,\dots,j,
\end{eqnarray*} as desired.
\end{proofA}

Since $\sF(K,p) \supset \sF_{irred}(K,p)$ thus we can give the following corollary.
\begin{corollary}Let p be an odd prime and K be a positive integer. Let A and B be defined as in Theorem \ref{thm:main}. Then
$$\Gamma(\sF(K,p)) \ge \log_{2}\bigg(\frac{A}{W(2^BA)}\bigg)$$
\end{corollary}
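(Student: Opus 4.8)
The plan is to deduce this corollary directly from Theorem~\ref{thm:main} together with the monotonicity of family complexity under family inclusion. Concretely, I would first record the elementary observation that if $\sG \subseteq \sH$ are two families of binary sequences of the same length $N$, then $\Gamma(\sG) \le \Gamma(\sH)$: indeed, any specification $e_{i_1}=\epsilon_1,\dots,e_{i_j}=\epsilon_j$ that can be met by some sequence in $\sG$ is \emph{a fortiori} met by that same sequence viewed as an element of $\sH$, so every $j$ admissible for $\sG$ is admissible for $\sH$, and taking the supremum gives the inequality. This is immediate from the definition of $f$-complexity given above and needs no computation.

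Next I would verify the containment $\sF_{irred}(K,p) \subseteq \sF(K,p)$. By Construction~\ref{construction}, $\sF(K,p)$ consists of all sequences $E_p(f)$ with $f\in\F_p[x]$ of degree $1\le k\le K$ having no multiple zeros in $\F_p$. A monic irreducible polynomial of degree exactly $K$ over $\F_p$ has degree in the range $1\le k\le K$ and, being irreducible of degree $K\ge 1$, certainly has no repeated roots (it is separable, as $\F_p$ is perfect and an irreducible polynomial over a perfect field is separable; alternatively it simply has no roots in $\F_p$ at all when $K>1$, and when $K=1$ it is linear hence squarefree). Therefore every generator used to build $\sF_{irred}(K,p)$ is among the generators allowed for $\sF(K,p)$, and the corresponding sequences lie in $\sF(K,p)$. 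This is exactly the inclusion asserted in the line "Since $\sF(K,p) \supset \sF_{irred}(K,p)$" preceding the corollary.

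Finally I would chain the two facts with Theorem~\ref{thm:main}: applying the monotonicity observation to $\sG=\sF_{irred}(K,p)$ and $\sH=\sF(K,p)$ gives
$$\Gamma(\sF(K,p)) \ge \Gamma(\sF_{irred}(K,p)),$$
and Theorem~\ref{thm:main} with $k=K$ bounds the right-hand side below by $\log_{2}\bigl(A/W(2^BA)\bigr)$, where $A$ and $B$ are the quantities defined there with $k$ replaced by $K$. Combining the two displays yields the claimed bound $\Gamma(\sF(K,p)) \ge \log_{2}\bigl(A/W(2^BA)\bigr)$.

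There is essentially no obstacle here; the only point requiring a moment's care is the routine check that irreducible polynomials of degree $K$ satisfy the "no multiple zeros" hypothesis of Construction~\ref{construction}, and that the notational substitution $k\mapsto K$ in the definitions of $A$ and $B$ matches the statement of the corollary. Everything else is the one-line inheritance of a lower bound through a family inclusion.
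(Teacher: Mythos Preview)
Your proposal is correct and follows exactly the paper's approach: the paper justifies the corollary in a single clause, ``Since $\sF(K,p) \supset \sF_{irred}(K,p)$,'' and your argument simply makes explicit the monotonicity of $\Gamma$ under inclusion, the verification of that inclusion, and the application of Theorem~\ref{thm:main} with $k=K$.
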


\section{Comparison} \label{sec:compare}

We compare the bounds given in Theorem \ref{thm:main} on family complexity of Construction \ref{construction} and given in Theorem A \cite{G2015} with respect to the value of lower bounds and time to compute them.  

Firstly, it is seen in Figures 1 and 2 that the lower bound given in Theorem \ref{thm:main} is better than bound given by Gyarmati \cite{G2015}. Here the red lines show the bound in \cite{G2015} (see also Theorem A in this paper) and the blue lines show the bound given Theorem \ref{thm:main} in this paper.
\begin{figure}[ht]
	\includegraphics[width=8cm, height=6cm]{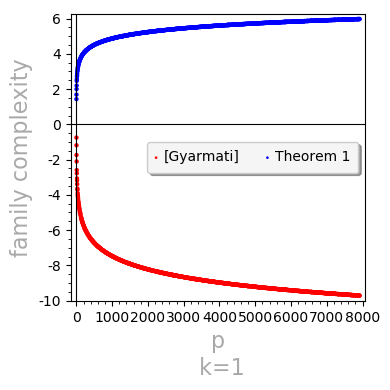}
	\includegraphics[width=8cm, height=6cm]{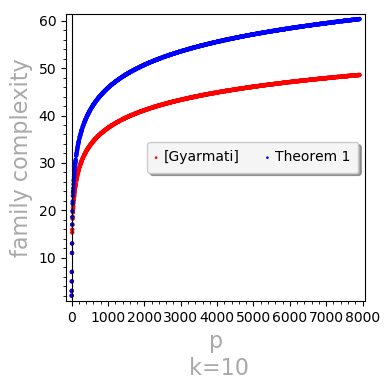}
	\caption{Lower bound on family complexity of Legendre sequence with respect to $p$ for fixed $k=1$ and $k=10$ respectively.}
	\label{fig:k:1-10}
\end{figure}
\begin{figure}
	\includegraphics[width=8cm, height=6cm]{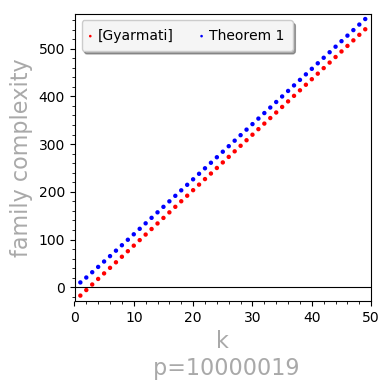}
	\includegraphics[width=8cm, height=6cm]{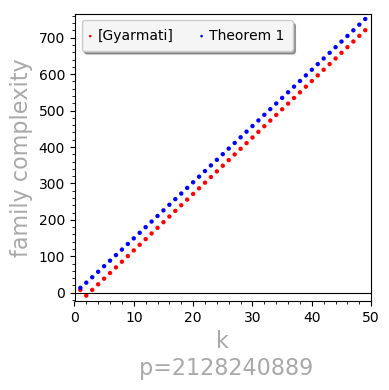}
	\caption{Lower bound on family complexity of Legendre sequence with respect to $k$ for fixed $p=10000019$ and $p=2128240847$ respectively.}
	\label{fig:k:1-10}
\end{figure}

In Figure 1, both bounds on family complexity of Construction \ref{construction} is plotted with respect to primes $p<8000$ for fixed $k=1$ and $k=10$ respectively. For $k=1$, it is seen that bound given by Gyarmati is negative, on the other hand, the bound in Theorem \ref{thm:main} is always positive. We note that bound given by Gyarmati turns into positive for $p \ge 2128240847$. For $k=10$, it is seen that both lower bounds are positive and the lower bound given in Theorem \ref{thm:main} is better than bound given by Gyarmati for all $p<8000$. In Figure 2 the lower bound on family complexity of Construction \ref{construction} is plotted in range $k \in [1,50]$ for fixed $p=10000019$ and $p=2128240847$ respectively. Here, $p=10000019$ is the first prime greater than $10^7$ and $p=2128240847$ is the first prime Gyarmati's bound turns into positive for $k=1$. In both cases, lower bounds are near to each other, but the lower bound in Theorem \ref{thm:main} is better.

\begin{figure}[hbt!]
	\includegraphics[width=8cm, height=6cm]{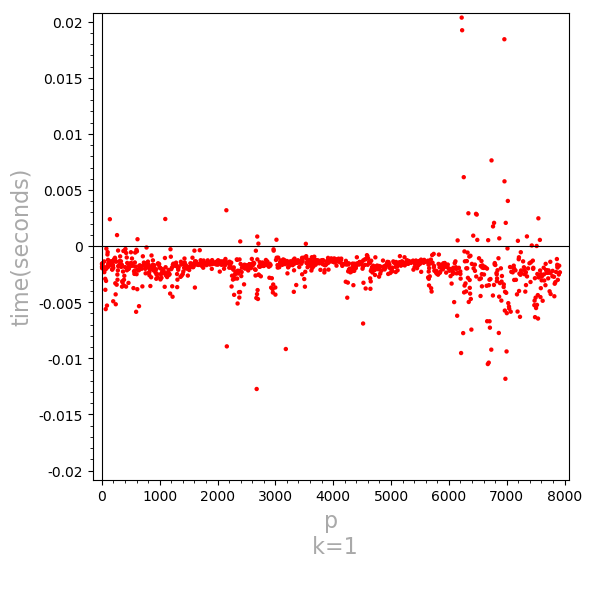}
	\includegraphics[width=8cm, height=6cm]{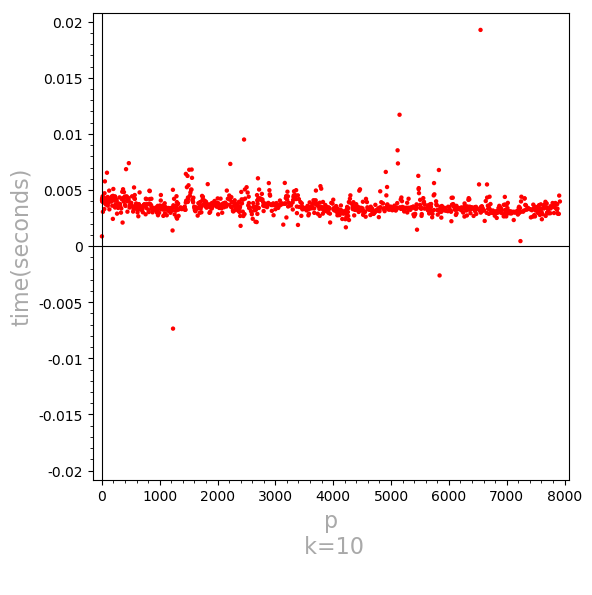}
	\caption{Times for calculate the family complexity of Legendre sequence with respect to $p$ for fixed $k=1$ and $k=10$ respectively.}
	\label{fig:k:1-10}
\end{figure}
\begin{figure}[hbt!]
	\includegraphics[width=8cm, height=6cm]{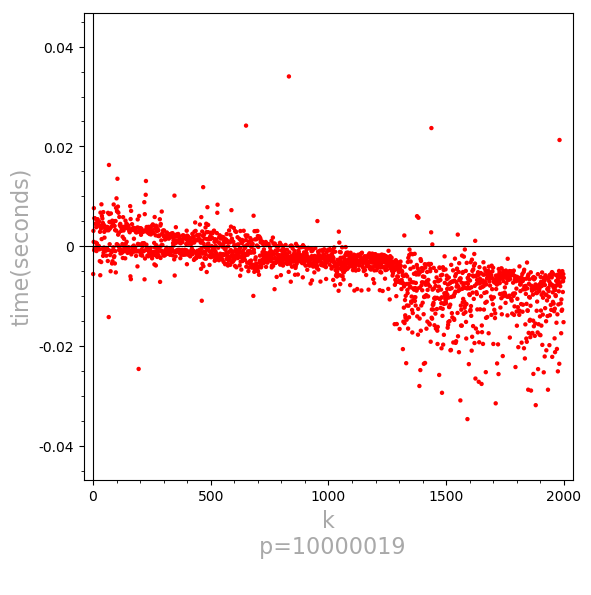}
	\includegraphics[width=8cm, height=6cm]{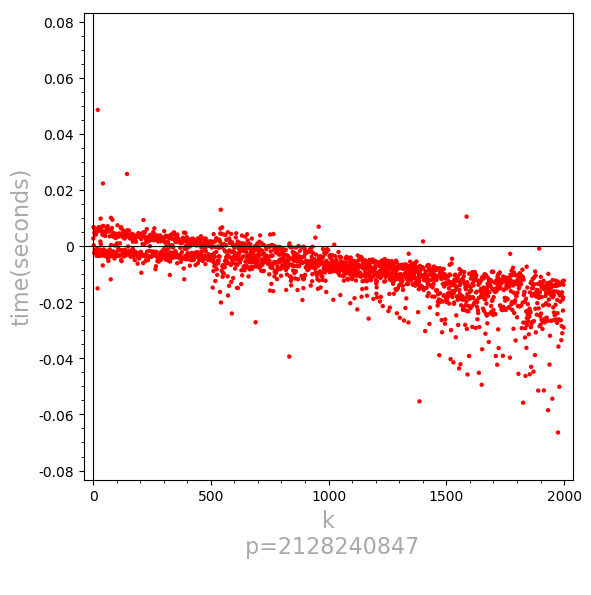}
	\caption{Times for calculate the family complexity of Legendre sequence with respect to $k$ for fixed $p=10000019$ and $p=2128240847$ respectively.}
	\label{fig:k:1-10}
\end{figure}

Secondly, we compare two bounds in terms of time complexity. We plotted in Figures 3 and 4 the difference between elapsed times for evaluating bounds given by Gyarmati \cite{G2015} and Theorem 1. In other words, we measured the elapsed times (in seconds) for calculating two bounds for all values of $p$ and $k$ that we have already examined in Figures 1 and 2. Then we plotted each difference of elapsed times of two bounds in Figures 3 and 4. It is seen that time to calculate two bounds are quite close to each other for all $p$ and $k$. For instance, in Figure 3 for $k=1$, time needed to calculate our bound is more than bound given by Gyarmati, the difference is at most 0.005 seconds. On the other hand, for $k=10$ bound given by Gyarmati is calculated slower and the difference between elapsed times is at most 0.01 seconds. Similarly, in Figure 4 it is seen that time to calculate two bounds differ from each other at most 0.06 seconds for primes $p=10000019$, $p=2128240847$ and $k \in \{1,2,\ldots,2000\}$. We conclude that the bound given in Theorem 1 can be calculated very fast for arbitrarily large prime powers and it only differs a few milliseconds form evaluating a bound depending only on $p$ and $k$. 

\section*{Acknowledgment}

	The authors are supported by the Scientific and Technological Research	Council of Turkey (TÜBİTAK) under Project No: \mbox{116R026}.

	\bibliographystyle{plain}      
	\bibliography{arxiv}   

\begin{thebibliography}{10}

\bibitem{AKMS2003}
Rudolf Ahlswede, Levon~H. Khachatrian, Christian Mauduit, and Andr\'{a}s
  S\'{a}rk\"{o}zy.
\newblock A complexity measure for families of binary sequences.
\newblock {\em Period. Math. Hungar.}, 46(2):107--118, 2003.

\bibitem{AMS20061}
Rudolf Ahlswede, Christian Mauduit, and Andr\'{a}s S\'{a}rk\"{o}zy.
\newblock Large families of pseudorandom sequences of {$k$} symbols and their
  complexity. {I}.
\newblock In {\em General theory of information transfer and combinatorics},
  volume 4123 of {\em Lecture Notes in Comput. Sci.}, pages 293--307. Springer,
  Berlin, 2006.

\bibitem{AMS20062}
Rudolf Ahlswede, Christian Mauduit, and Andr\'{a}s S\'{a}rk\"{o}zy.
\newblock Large families of pseudorandom sequences of {$k$} symbols and their
  complexity. {II}.
\newblock In {\em General theory of information transfer and combinatorics},
  volume 4123 of {\em Lecture Notes in Comput. Sci.}, pages 308--325. Springer,
  Berlin, 2006.

\bibitem{aly2006}
Hassan Aly and Arne Winterhof.
\newblock On the {$k$}-error linear complexity over {$\Bbb F_p$} of {L}egendre
  and {S}idelnikov sequences.
\newblock {\em Des. Codes Cryptogr.}, 40(3):369--374, 2006.

\bibitem{BRD2014}
Ramachandran Balasubramanian, C\'ecile Dartyge, and \'Elie Mosaki.
\newblock Sur la complexit\'e de familles d'ensembles pseudo-al\'eatoires.
\newblock {\em Annales de l'Institut Fourier}, 64(1):267--296, 2014.

\bibitem{SJ2011}
Sunil~K. Chebolu and Ján Mináč.
\newblock Counting irreducible polynomials over finite fields using the
  inclusion-exclusion principle.
\newblock {\em Mathematics Magazine}, 84(5):369--371, 2011.

\bibitem{LWF1996}
R.~M. Corless, G.~H. Gonnet, D.~E.~G. Hare, D.~J. Jeffrey, and D.~E. Knuth.
\newblock On the {L}ambert {$W$} function.
\newblock {\em Adv. Comput. Math.}, 5(4):329--359, 1996.

\bibitem{ding1998linear}
Cunsheng Ding, T~Hesseseth, and Weijuan Shan.
\newblock On the linear complexity of legendre sequences.
\newblock {\em IEEE Transactions on Information Theory}, 44(3):1276--1278,
  1998.

\bibitem{DFAA2004}
David~S. Dummit and Richard~M. Foote.
\newblock {\em Abstract {A}lgebra}.
\newblock John Wiley \& Sons, Inc., Hoboken, NJ, {T}hird edition, 2004.

\bibitem{E1779}
Leonhard Euler.
\newblock De serie lambertina plurimisque eius insignibus proprietatibus.
\newblock {\em Acta Academiae Scientarum Imperialis Petropolitinae 1779, 1783,
  pp. 29-51}, 6:350--369, 1921 (orig. date 1779).

\bibitem{F2010}
J{\'a}nos Foll{\'a}th.
\newblock Construction of pseudorandom binary sequences using additive
  characters over {GF}($2^k$) {II}.
\newblock {\em Periodica Mathematica Hungarica}, 60(2):127--135, 2010.

\bibitem{G1965}
Carl~Friedrich Gauss.
\newblock {\em Untersuchungen \"{u}ber h\"{o}here {A}rithmetik}.
\newblock Deutsch herausgegeben von H. Maser. Chelsea Publishing Co., New York,
  1965.

\bibitem{golomb2005signal}
Solomon~W Golomb and Guang Gong.
\newblock {\em Signal {D}esign for {G}ood {C}orrelation: For {W}ireless
  {C}ommunication, {C}ryptography, and {R}adar}.
\newblock Cambridge University Press, 2005.

\bibitem{GMS2004}
Louis Goubin, Christian Mauduit, and Andr\'{a}s S\'{a}rk\"{o}zy.
\newblock Construction of large families of pseudorandom binary sequences.
\newblock {\em J. Number Theory}, 106(1):56--69, 2004.

\bibitem{G20092}
Katalin Gyarmati.
\newblock Concatenation of pseudorandom binary sequences.
\newblock {\em Period. Math. Hungar.}, 58(1):99--120, 2009.

\bibitem{G2015}
Katalin Gyarmati.
\newblock On the complexity of a family of {L}egendre sequences with
  irreducible polynomials.
\newblock {\em Finite Fields Appl.}, 33:175--186, 2015.

\bibitem{GMS2012}
Katalin Gyarmati, Christian Mauduit, and Andr\'{a}s S\'{a}rk\"{o}zy.
\newblock Measures of pseudorandomness of families of binary lattices, {II} (a
  further construction).
\newblock {\em Publ. Math. Debrecen}, 80(3-4):479--502, 2012.

\bibitem{GMS2014}
Katalin Gyarmati, Christian Mauduit, and András Sárközy.
\newblock {\em The cross-correlation measure for families of binary sequences},
  page 126–143.
\newblock Cambridge University Press, 2014.

\bibitem{HF2001}
Jeffrey Hoffstein and Daniel Lieman.
\newblock The distribution of the quadratic symbol in function fields and a
  faster mathematical stream cipher.
\newblock In {\em Cryptography and Computational Number Theory}, pages 59--68,
  Basel, 2001. Birkh{\"a}user Basel.

\bibitem{LN1997}
Rudolf Lidl and Harald Niederreiter.
\newblock {\em Finite {F}ields}, volume~20 of {\em Encyclopedia of Mathematics
  and its Applications}.
\newblock Cambridge University Press, Cambridge, second edition, 1997.
\newblock With a foreword by P. M. Cohn.

\bibitem{AMS1997}
Christian Mauduit and Andr\'{a}s S\'{a}rk\"{o}zy.
\newblock On finite pseudorandom binary sequences. {I}. {M}easure of
  pseudorandomness, the {L}egendre symbol.
\newblock {\em Acta Arith.}, 82(4):365--377, 1997.

\bibitem{AMS2013}
Christian Mauduit and Andr\'{a}s S\'{a}rk\"{o}zy.
\newblock Family complexity and {VC}-dimension.
\newblock In {\em Information {T}heory, {C}ombinatorics, and {S}earch
  {T}heory}, volume 7777 of {\em Lecture Notes in Comput. Sci.}, pages
  346--363. Springer, Heidelberg, 2013.

\bibitem{MW2004}
Wilfried Meidl and Arne Winterhof.
\newblock On the autocorrelation of cyclotomic generators.
\newblock In {\em Finite fields and applications}, volume 2948 of {\em Lecture
  Notes in Comput. Sci.}, pages 1--11. Springer, Berlin, 2004.

\bibitem{HBFF2013}
Gary~L. Mullen, editor.
\newblock {\em Handbook of finite fields}.
\newblock Discrete Mathematics and its Applications (Boca Raton). CRC Press,
  Boca Raton, FL, 2013.

\bibitem{NA2015}
Harald Niederreiter and Arne Winterhof.
\newblock {\em Applied {N}umber {T}heory}.
\newblock Springer, Cham, 2015.

\bibitem{paley1933orthogonal}
Raymond~EAC Paley.
\newblock On orthogonal matrices.
\newblock {\em Journal of Mathematics and Physics}, 12(1-4):311--320, 1933.

\bibitem{Spa2013}
Igor Shparlinski.
\newblock {\em Cryptographic applications of analytic number theory}, volume~22
  of {\em Progress in Computer Science and Applied Logic}.
\newblock Birkh\"{a}user Verlag, Basel, 2003.
\newblock Complexity lower bounds and pseudorandomness.

\bibitem{S2017}
András Sárközy.
\newblock On pseudorandomness of families of binary sequences.
\newblock {\em Discrete Applied Mathematics}, 216:670--676, 2017.

\bibitem{turyn1964linear}
Richard~J Turyn.
\newblock The linear generation of the legendre sequence.
\newblock {\em SIAM Journal on Applied Mathematics}, 12(1):115, 1964.

\bibitem{WO2016}
Arne Winterhof and O\u{g}uz Yayla.
\newblock Family complexity and cross-correlation measure for families of
  binary sequences.
\newblock {\em Ramanujan J.}, 39(3):639--645, 2016.

\end{thebibliography}
	
\end{document}